 \font\tenmsb=msbm10 scaled\magstep
\newcommand{\captionfonts}{\tiny}
\long\def\@makecaption#1#2{%
  \vskip\abovecaptionskip
  \sbox\@tempboxa{{\captionfonts #1: #2}}%
  \ifdim \wd\@tempboxa >\hsize
    {\captionfonts #1: #2\par}
  \else
    \hbox to\hsize{\hfil\box\@tempboxa\hfil}%
  \fi
  \vskip\belowcaptionskip}
\newtheorem{theorem}{Theorem}[section]
\newtheorem{lemma}[theorem]{Lemma}
\theoremstyle{definition}
\newtheorem{definition}[theorem]{Definition}
\theoremstyle{remark}
\newtheorem{remark}[theorem]{Remark}
\newtheorem{observation}[theorem]{Observation}
\numberwithin{equation}{section}
\def\R{\mathbb R}    
\begin{document}

\title[Trumping and Power Majorization]{Trumping and Power Majorization}

\author[D.~W.\ Kribs]{David W. Kribs$^{1,2}$}
\author[R.\ Pereira]{Rajesh Pereira$^1$}
\author[S.\ Plosker]{Sarah Plosker$^1$}
\address{$^1$Department of Mathematics \& Statistics, University of Guelph, Guelph, ON, Canada N1G 2W1}
\address{$^2$Institute for Quantum Computing, University of Waterloo, Waterloo, ON, Canada N2L 3G1}

\keywords{majorization, quantum entanglement, entanglement-assisted local transformation, catalytic majorization, trumping, power majorization}

\begin{abstract}
Majorization is a basic concept in matrix theory that has found applications in numerous settings over the past century. Power majorization is a more specialized notion that has been studied in the theory of inequalities. On the other hand, the trumping relation has recently been considered in quantum information, specifically in entanglement theory. We explore the connections between trumping and power majorization. We prove an analogue of Rado's theorem for power majorization and consider a number of examples.
\end{abstract}

\maketitle

\section{Introduction}
Majorization is one of the most basic concepts in matrix theory, first considered over a century ago as a way to address a diverse set of problems from economics, engineering, and physics \cite[Chapter 1]{MOA11}. More recently, it has become a central mathematical tool in quantum information theory, beginning with work of Nielsen \cite{Nie99} that linked it with quantum operations described by local operations and classical communication \cite{LoPr01}. This has enabled the use of extensive results on majorization to gain further insight into comparisons and measures of quantum entanglement, one of the most useful properties of quantum information.

More refined notions of majorization have also been studied in matrix theory, though less so in quantum information theory. With regard to \emph{power majorization}, for instance, Clausing \cite{Cla84} initially asked if it is equivalent to majorization. A counterexample for dimension $n\geq 42$ was given in \cite{Ben86}, and  for $n\geq 4$ in \cite{All88}. As stated in \cite{Ben11}, these examples were rather artificial, and it was some years before the first natural counterexamples appeared.

From a different direction, motivated initially by issues in quantum entanglement theory, Jonathan and Plenio \cite{JoPl99} introduced the idea of \emph{trumping}. 
Trumping is a generalization of majorization in that, given two incomparable vectors $x$ and $y$ (incomparable in the sense that one vector does not majorize the other), it is sometimes possible to find a third vector $z$, having all positive components, such that $x\otimes z$ is majorized by $y\otimes z$, thus allowing us to compare a larger set of vectors than was possible using majorization alone. This definition is made precise in section \ref{sec:defnsnotn}.

In this paper we prove some results that connect trumping to power majorization and we consider several examples. This provides another link between studies in matrix theory and quantum information theory. The next section covers requisite background material and results. The following two sections make the explicit geometric connections between the notions and discuss examples of trumping and power majorization.

\section{Background and Preliminary Results}\label{sec:defnsnotn}

Let $x=(x_{1},x_{2},...,x_{d})\in \mathbb{R}^d$. Then we let the vector $x^{\downarrow}=(x^{\downarrow}_{1},x^{\downarrow}_{2},...,x^{\downarrow}_{d})$ denote the vector in $\mathbb{R}^d$ whose elements consist of the elements of $x$ reordered so that $x^{\downarrow}_{1}\ge x^{\downarrow}_{2}\ge x^{\downarrow}_{3}\ge...\ge x^{\downarrow}_{d}$.  Similarly $x^{\uparrow}=(x^{\uparrow}_{1},x^{\uparrow}_{2},...,x^{\uparrow}_{d})$ denotes the vector in $\mathbb{R}^d$ whose elements consist of the elements of $x$ reordered so that $x^{\uparrow}_{1}\le x^{\uparrow}_{2}\le x^{\uparrow}_{3}\le...\le x^{\uparrow}_{d}$.

\begin{definition}\label{defn:maj}  Let $(x_{1},x_{2},...,x_{d})$ and $(y_{1},y_{2},...,y_{d})$ be two $d$-tuples of real numbers.  
Then we say that $(x_{1},x_{2},...,x_{d})$ is majorized  by $(y_{1},y_{2},...,y_{d})$, written $x\prec y$,   if 
\begin{eqnarray*}
\sum_{j=1}^{k}x^{\downarrow}_{j}\leq \sum_{j=1}^{k}y^{\downarrow}_{j}\quad 1\leq k \leq d,
\end{eqnarray*}
 and 
 \begin{eqnarray}\label{eq:majeq}
 \sum_{j=1}^{d}x_{j}= \sum_{j=1}^{d}y_{j}.
 \end{eqnarray} 
\end{definition}


The majorization relation is a partial order when restricted to the set of vectors $x\in \R^d$ that are non-increasing, with $x \prec y$ and $y \prec x$ if and only if
$x^{\downarrow} = y^{\downarrow}$. If equality does not necessarily hold in equation (\ref{eq:majeq}), we say that $x$ is \emph{sub-majorized} by $y$ and we write $x\prec_w y$, where the $w$ stands for ``weak''.

A similar definition holds if we order the components of the vectors in \emph{non-decreasing} order:
 $x$ is majorized by $y$
if 
\begin{eqnarray*}
\sum_{j=1}^k x^{\uparrow}_j \geq \sum_{j=1}^k y^{\uparrow}_j\quad 1\leq k \leq d,
\end{eqnarray*}
and 
\begin{eqnarray}\label{eq:majeq2}
\sum_{j=1}^{d}x_{j}= \sum_{j=1}^{d}y_{j}.
\end{eqnarray}
  If equality does not necessarily hold in equation (\ref{eq:majeq2}), we say that $x$ is \emph{super-majorized} by $y$ and we write $x\prec^w y$. Note that sub-majorization and super-majorization are not equivalent in general.


It is often the case that two vectors $x$ and $y$ are incomparable: that is, neither is majorized by the other. However, also motivated by entanglement theory, in \cite{JoPl99} the authors demonstrate that it is sometimes possible to find a unit vector $z$, referred to as a ``catalyst'',  such that $x\nprec y$ but $x\otimes z\prec y\otimes z$. Formally, we have the following definition.

\begin{definition}
Let $(x_{1},x_{2},...,x_{d})$ and $(y_{1},y_{2},...,y_{d})$ be two $d$-tuples of real numbers. 
We say that $x$ is \emph{trumped} by $y$ and write $x\prec_T y$ if there exists a unit vector $z\in \R^n$ with positive components such that $x\otimes z\prec y\otimes z$.  
\end{definition}

Similarly, sub-trumping (written $x\prec_{wT} y\Leftrightarrow x\otimes z \prec_w y\otimes z$) and super-trumping ($x\prec^w_T y$) can be defined. The vector  $z$ is typically taken to have all positive components to  avoid the potential issue of its components summing to zero.

In \cite{JoPl99} the authors refer to trumping as \emph{entanglement-assisted local transformation (ELQCC)}. Because ELQCC can be seen as entanglement-assisted local operations and classical communication, some choose to abbreviate this as ELOCC (see, e.g., \cite{AuNe08}). Also in \cite{AuNe08}, the authors call trumping \emph{catalytic majorization}, thereby emphasizing the link with majorization, and the role of the catalyst $z$. Moreover,  it was shown in \cite{DaKl01} that the dimension of the  catalyst may be arbitrarily large.
If we consider the vector $y$ to be infinite-dimensional by appending 0's, then the closure in the $\ell_1$ norm of the set  of all vectors $x$ trumped by $y$ (where $x$ may be infinite-dimensional but having finite support) is characterized by $||x||_p\leq||y||_p$ for all $p\geq 1$ (see \cite{AuNe08} for details).

Note that if we are comparing two vectors $x$ and $y$ to see if one is majorized or trumped by the other, we can ``delete'' corresponding zeros from both vectors without affecting the relation. In other words, we can assume without loss of generality that one of the vectors has no zero components. Similarly, if we are comparing two vectors of different dimensions, we can append zeros to the smaller vector to obtain vectors of equal dimension, and so we can effectively compare two vectors of different sizes.

Let us define $\sigma(x)=-\sum_{i=1}^dx_i\log x_i$, which we recall is the formula for the von Neumann entropy of a density matrix (that is, a positive semi-definite trace one matrix) with eigenvalues $x_i$. We use the convention that $0\log 0\equiv 0$. Let us also define $A_\nu(x)=\left(\frac1d\sum_{i=1}^dx_i^\nu\right)^{\frac1\nu}$ for real numbers $\nu\neq 0$ and $A_0(x)=\left(\prod_{i=1}^d x_i\right)^\frac{1}{d}$. In \cite{Tur07}, Turgut established the following result.

\begin{theorem}\cite{Tur07} \label{turg}
For two real $d$-dimensional vectors $x$ and $y$ with non-negative components such that $x$ has non-zero elements and the vectors are distinct up to permutation (i.e.\ $x^{\uparrow}\neq y^{\uparrow}$), the relation $x\prec_T y$ is equivalent to the following three strict inequalities:
\begin{enumerate}
\item $A_\nu(x)>A_\nu(y),\quad \forall \nu\in (-\infty, 1)$,\label{T1}
\item $A_\nu(x)<A_\nu(y),\quad \forall \nu\in (1,\infty)$,\label{T2}
\item $\sigma(x)>\sigma(y)$.
\end{enumerate}
\end{theorem}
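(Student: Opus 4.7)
The plan is to prove the two directions separately. For necessity (the direction $x\prec_T y \Rightarrow$ the three inequalities), the main tool is the Hardy--Littlewood--P\'olya theorem, which states that $u \prec v$ holds if and only if $\sum_k f(u_k) \le \sum_k f(v_k)$ for every continuous convex function $f$. Given a catalyst $z$ with $x\otimes z \prec y\otimes z$, I would apply this with the families of convex test functions $f(t)=t^\nu$ (strictly convex for $\nu>1$ or $\nu<0$), $f(t)=-t^\nu$ (strictly convex for $0<\nu<1$), $f(t)=-\log t$ (giving the $\nu=0$ case), and $f(t)=t\log t$ (giving the entropy $\sigma$). In every case, the multiplicative identity $(x_iz_j)^\nu=x_i^\nu z_j^\nu$, or in the logarithmic and entropy cases the splitting $\log(x_iz_j)=\log x_i+\log z_j$, causes the catalyst-dependent terms either to factor out as a positive constant (which can be divided through) or to appear as a common additive term (which cancels once one uses $\sum_i x_i=\sum_i y_i$). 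What remains is exactly the desired comparison of $A_\nu$ or $\sigma$, with the direction of inequality in each $A_\nu$ case determined by the sign of $1/\nu$.

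The strictness of each inequality follows from the hypothesis $x^{\uparrow}\neq y^{\uparrow}$. This forces the multisets $\{x_iz_j\}$ and $\{y_iz_j\}$ to be distinct (the positive $z_j$ factors can be cancelled), so for strictly convex $f$ the Hardy--Littlewood--P\'olya inequality is strict, and strictness then propagates through the elementary monotone manipulations to each comparison involving $A_\nu$ and $\sigma$.

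For sufficiency, one must construct a catalyst $z$ from the assumed power-mean and entropy inequalities, and this is where the main obstacle lies. A natural strategy is to pass to the logarithmic coordinates $a_i=\log x_i$, $b_i=\log y_i$, $c_j=\log z_j$, so that Turgut's conditions become a two-sided family of Laplace-transform inequalities comparing the empirical measures $\mu_x=\sum_i \delta_{a_i}$ and $\mu_y=\sum_i \delta_{b_i}$, together with a first-order ``entropy slope'' condition at $\nu=1$. The trumping requirement $x\otimes z\prec y\otimes z$ then translates, via the usual cumulative/Lorenz-curve reformulation of majorization, into pointwise domination of the Lorenz curve of the convolved empirical measure $\mu_x*\mu_z$ by that of $\mu_y*\mu_z$, and the task is to choose $\mu_z$ with a sufficiently rich support to produce such domination.

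The hardest step is the explicit construction of such a catalyst. I would try a $z$ whose log-support is a long arithmetic progression, so that $\mu_z$ approximates an atomless density and smooths the discrete discrepancies between $\mu_x$ and $\mu_y$; the strict Turgut inequalities supply a positive margin, and a compactness argument should convert that margin into the pointwise Lorenz-curve comparison required for $x\otimes z\prec y\otimes z$ once the catalyst dimension is large enough. Controlling the transition near $\nu=1$, where the strict inequalities reverse direction and the entropy hypothesis supplies the missing derivative information, is the delicate analytic step and is what distinguishes exact trumping from the weaker $\ell_p$-closure characterization of \cite{AuNe08}.
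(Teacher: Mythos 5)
This theorem is stated in the paper purely as a citation of Turgut \cite{Tur07}; the paper contains no proof of it, so there is no in-paper argument to compare against. Evaluated on its own, your necessity direction is essentially correct: writing $x\otimes z\prec y\otimes z$ and applying the Hardy--Littlewood--P\'olya characterization with the convex test functions $t^\nu$, $-t^\nu$, $-\log t$, and $t\log t$, the catalyst factors out multiplicatively for the power sums and additively for the logarithms, and the resulting one-sided inequalities for $\sum_i x_i^\nu$ versus $\sum_i y_i^\nu$ convert into the stated inequalities on $A_\nu$ after taking into account the sign of $1/\nu$. Strictness does follow from $x^{\uparrow}\neq y^{\uparrow}$ as you argue, since equality of the multisets $\{x_iz_j\}$ and $\{y_iz_j\}$ would force $\sum_i x_i^p=\sum_i y_i^p$ for all $p$ (divide out $\sum_j z_j^p>0$), hence $x^{\uparrow}=y^{\uparrow}$. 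One small loose end: the hypothesis permits $y$ to have zero entries, so the $\nu\le 0$ inequalities are degenerate ($A_\nu(y)=0$) and the strict-convexity step for $\sigma$ needs $f(0)=0$ handled, but these are routine.

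The sufficiency direction, however, is a genuine gap, and it is the hard direction of the theorem. You describe what you ``would try'' (a catalyst whose log-support is a long arithmetic progression) and what ``should'' happen (a compactness argument converting the strict margins into Lorenz-curve domination), and you explicitly flag that handling the transition at $\nu=1$ ``is the delicate analytic step'' --- but none of this is actually carried out. There is no construction of $z$, no quantitative estimate showing that the proposed catalyst closes the finitely many partial-sum inequalities of $x\otimes z\prec y\otimes z$, and no argument showing that the single derivative condition $\sigma(x)>\sigma(y)$ at $\nu=1$ suffices to control the regime where the two families of strict inequalities both degenerate. This direction constitutes almost all of the content of Turgut's result (the necessity direction is a few lines), so what you have written is a plan, not a proof.
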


\begin{remark}
Note that for dimensions two and three, trumping is equivalent to majorization: Clearly majorization implies trumping, and for the reverse direction, first note that for any dimension $d$, $x\prec_T y$ implies that we have $x_1^\uparrow\geq y_1^\uparrow$ and $x_d^\uparrow\leq y_d^\uparrow$. Additionally, for an $n$-dimensional catalyst $z$, trumping implies
\begin{eqnarray*}
\sum_{i, j=1}^{d,n}x_iz_j&=&\sum_{i, j=1}^{d,n}y_iz_j\\
\sum_{i=1}^{d}x_i\left(\sum_{j=1}^{n}z_j\right)&=&\sum_{i=1}^{d}y_i\left(\sum_{j=1}^{n}z_j\right)\\
\sum_{i=1}^{d}x_i&=&\sum_{i=1}^{d}y_i.
\end{eqnarray*}

The two-dimensional case becomes obvious. For $d=3$, by combining $x_3^\uparrow\leq y_3^\uparrow$ with
$x_1^\uparrow+x_2^\uparrow+x_3^\uparrow=y_1^\uparrow+y_2^\uparrow+y_3^\uparrow$, we obtain
$x_1^\uparrow+x_2^\uparrow\geq y_1^\uparrow+y_2^\uparrow$, and the result follows.

The above holds similarly for sub- and super-trumping implying sub- and super-majorization, respectively, for $d=2$.
\end{remark}

In \cite{Kli04,Kli07}, Klimesh establishes a theorem that shows trumping is equivalent to a series of inequalities for a family of additive Schur-convex functions. For a $d$-dimensional probability vector $x$, let
\[
  f_r(x) =
  \begin{cases}
    \ln \sum_{i=1}^d x_i^r & (r>1); \\
    \sum_{i=1}^d x_i \ln x_i & (r = 1); \\
    -\ln \sum_{i=1}^d x_i^r & (0<r<1); \\
    -\sum_{i=1}^d \ln x_i & (r = 0); \\
    \ln \sum_{i=1}^d x_i^r & (r<0).
  \end{cases}
\]

If any of the components of $x$ are
$0$, we take $f_r(x) = \infty$ for $r \leq 0$.

\begin{theorem}(\cite{Kli04,Kli07})
 Let $x=(x_1,\ldots,x_d)$ and $y=(y_1,\ldots,y_d)$ be $d$-dimensional
 probability vectors.
 Suppose that $x$ and $y$ do not both contain components equal to $0$
 and that $x^{\uparrow} \neq y^{\uparrow}$.  Then $x \prec_T y$ if and only if
 $f_r(x) < f_r(y)$ for all real numbers $r$.
\end{theorem}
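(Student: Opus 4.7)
The plan is to deduce this result directly from Turgut's theorem (Theorem \ref{turg}) by verifying that the family of inequalities $f_r(x) < f_r(y)$ indexed by $r \in \mathbb{R}$ encodes precisely the three conditions (1)--(3) of that theorem. The key observation is that each $f_r$ is, up to a monotone reparametrization, a rescaled logarithm of the power mean $A_r$, so $f_r(x) < f_r(y)$ is equivalent to an inequality between $A_r(x)$ and $A_r(y)$ whose direction depends on the sign of $r$ and of $1-r$.

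The bulk of the work is a case analysis over five ranges of $r$. For $r > 1$, $f_r(x) < f_r(y)$ is equivalent to $\sum_i x_i^r < \sum_i y_i^r$, which by monotonicity of $t \mapsto t^{1/r}$ on $(0,\infty)$ matches $A_r(x) < A_r(y)$, i.e.\ Turgut's condition (2). For $0 < r < 1$ the defining sign of $f_r$ is reversed, so $f_r(x) < f_r(y)$ becomes $\sum_i x_i^r > \sum_i y_i^r$, hence $A_r(x) > A_r(y)$, matching (1) on $(0,1)$. For $r < 0$ both $r$ and $1/r$ are negative, so exactly one reversal occurs when passing to power means and $f_r(x) < f_r(y)$ again yields $A_r(x) > A_r(y)$, matching (1) on $(-\infty,0)$. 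The two endpoint cases are direct: $f_1(x) < f_1(y)$ reads literally as $\sigma(x) > \sigma(y)$, which is (3); and $f_0(x) < f_0(y)$ amounts to $\sum_i \ln x_i > \sum_i \ln y_i$, which upon averaging and exponentiating becomes $A_0(x) > A_0(y)$, matching (1) at $\nu = 0$.

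To close the argument I would separately address the degenerate case in which $y$ has a zero component while $x$ does not (the opposite case is ruled out by hypothesis, since it would force both formulations to fail). Here $f_r(y) = \infty$ for $r \leq 0$ by the stated convention, so $f_r(x) < f_r(y)$ is automatic; on the Turgut side, $A_r(y) = 0$ for $r \leq 0$ while $A_r(x) > 0$, so condition (1) is automatic as well, and only the $r > 0$ sub-cases need checking, which they do without any change. Taking the conjunction across all five ranges covers all of $\mathbb{R}$ and is equivalent to (1), (2), and (3) holding simultaneously, so combining with Turgut's theorem gives the claimed equivalence $x \prec_T y \iff f_r(x) < f_r(y)$ for all $r \in \mathbb{R}$.

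The main obstacle, modest as it is, will be the careful bookkeeping of inequality directions as the signs of $r$ and $1-r$ change across the five ranges. No genuinely new analytic input is needed beyond this translation, since the substantive content has been packaged into Turgut's theorem.
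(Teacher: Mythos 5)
The paper does not actually prove this theorem; it is cited to Klimesh \cite{Kli04,Kli07}, and the paper merely remarks afterward that ``Klimesh and Turgut's conditions are easily seen to be equivalent.'' Your proposal simply fleshes out that remark by deriving the statement from Turgut's theorem, so the approach is consistent with the paper's viewpoint, and your case-by-case translation of $f_r(x)<f_r(y)$ into the inequalities on $A_\nu$ and $\sigma$ is correct: the monotonicity of $t\mapsto t^{1/r}$ flips exactly when $r<0$, and the sign convention built into $f_r$ on $(0,1)$ and at $r=0$ produces the reversals matching Turgut's condition (1).

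One point needs correction. You write that the case where $x$ contains a zero component and $y$ does not ``is ruled out by hypothesis.'' It is not: the hypothesis only excludes \emph{both} vectors having a zero, so this case is permitted. It also lies outside the scope of Turgut's theorem as stated (which requires all entries of $x$ to be non-zero), so it genuinely requires a separate argument rather than a reduction. The argument is short and you in fact gesture at it: with a positive catalyst $z$, $x\otimes z$ inherits a zero entry while $y\otimes z$ has none, so the smallest entry of $x\otimes z$ is strictly less than that of $y\otimes z$ and $x\otimes z\nprec y\otimes z$, hence $x\nprec_T y$; meanwhile $f_r(x)=\infty$ for $r\le 0$ makes $f_r(x)<f_r(y)$ fail. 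Both sides of the biconditional are false, so the equivalence holds vacuously. With that parenthetical fixed and the one-line majorization argument supplied, the proof is complete and matches the paper's intent.
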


Klimesh and Turgut's conditions are easily seen  to be equivalent.  We will use Klimesh's notation as it is more convenient for our purposes.
We now introduce the concept of power majorization which has been studied extensively.

\begin{definition}
Let $x$ and $y$ be vectors of non-negative components. We say that $x$ is \emph{power majorized} by $y$, denoted $x\preceq_p y$, if $x_1^p+\cdots +x_d^p\leq y_1^p+\cdots +y_d^p$ for all $p\geq 1, p\leq 0$ and the inequality switches direction when $0\leq p\leq 1$. In particular, we note that equality holds when $p=0, 1$. 
We define \emph{strict power majorization}, denoted $x\prec_{p}y$, to be power majorization with strict inequality, and equality if and only if $p=0,1$.
\end{definition}

Power majorization is unfortunately not as well-behaved of a partial order on vectors as majorization is, in the following sense: If $x$ is majorized by $y$ and $\sum_{i=1}^d\phi(x_i)=\sum_{i=1}^d\phi(y_i)$ for some strictly convex function $\phi$, then $x^{\uparrow}= y^{\uparrow}$. This is not the case with power majorization. Indeed, \cite[Theorem 2]{Ben11} gives an example of vectors $x$ and $y$, where $x\preceq_p y$, and  $\sum_{i=1}^d\phi(x_i)=\sum_{i=1}^d\phi(y_i)$ for $\phi(c)=c^2$, but $x^{\uparrow}\neq y^{\uparrow}$.

We note that power majorization can be expressed in terms of Klimesh's functionals.

\begin{observation} Let $x$ and $y$ be vectors in $\mathbb{R}^{d}$ with positive components.  Then $x\preceq_p y$ if and only if $f_r(x)\le f_r(y)$ for all $r\in \mathbb{R}$.   \end{observation}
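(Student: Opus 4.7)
The plan is to verify the equivalence by unpacking Klimesh's piecewise definition of $f_r$ one regime at a time and matching it directly to the corresponding clause of power majorization. For $r$ away from $\{0,1\}$ the equivalence is essentially formal: when $r>1$ or $r<0$, $f_r(x)\le f_r(y)$ reads $\ln\sum_i x_i^r\le\ln\sum_i y_i^r$, and by monotonicity of $\ln$ this is $\sum_i x_i^r\le\sum_i y_i^r$, which is exactly the power-majorization inequality in those ranges. For $0<r<1$ the sign flips in Klimesh's definition, giving $\sum_i x_i^r\ge\sum_i y_i^r$, again matching. So on this open set the observation is just bookkeeping.

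The real work is at the boundary values $r=0$ and $r=1$, where $f_r$ is defined by the entropy-like expressions rather than by $\sum_i x_i^r$ itself. I would treat these using the identities
\[
\sum_i x_i\ln x_i=\left.\tfrac{d}{dp}\right|_{p=1}\sum_i x_i^p, \qquad \sum_i \ln x_i=\lim_{p\to 0}\frac{\sum_i x_i^p-d}{p},
\]
both valid because positivity of the $x_i$ makes $p\mapsto\sum_i x_i^p$ real-analytic on $\R$. For the forward direction at $r=1$, let $h(p)=\sum_i y_i^p-\sum_i x_i^p$; the equality clause of power majorization at $p=1$ gives $h(1)=0$, and the $p>1$ clause gives $h(p)\ge 0$ for $p>1$, whence the right derivative satisfies $h'(1)\ge 0$, which rearranges to $f_1(x)\le f_1(y)$. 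The $r=0$ case is analogous: apply the reversed inequality on $0<p<1$ together with the trivial identity $\sum_i x_i^0=d=\sum_i y_i^0$ and take a one-sided derivative at $p=0$.

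For the reverse direction, the equality $\sum_i x_i=\sum_i y_i$ required by power majorization at $p=1$ is not produced directly by $f_1(x)\le f_1(y)$; I would instead extract it by a pinching argument, letting $r\to 1^+$ in the $r>1$ clause and $r\to 1^-$ in the $0<r<1$ clause to obtain the two opposite inequalities on the coordinate sums. Equality at $p=0$ is automatic. The only delicate points are the limit manipulations at $r=0,1$, and these are the main place where something could in principle misbehave, but the strict positivity hypothesis makes the relevant one-sided derivatives and difference quotient limits immediate, so I expect no real obstacle.
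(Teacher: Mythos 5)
Your proof is correct and takes essentially the same route as the paper: you dispose of $r\neq 0,1$ by monotonicity of $\ln$, and at the boundary values you differentiate $p\mapsto\sum_i y_i^p-\sum_i x_i^p$ at $p=0,1$ (where it vanishes) and read off the sign of the derivative from the sign constraints of power majorization, which is exactly the paper's argument with $g=-h$. The only cosmetic difference is that you spell out the pinching argument for the $\sum x_i=\sum y_i$ equality in the converse, which the paper dismisses as immediate.
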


It follows immediately from the definition of power majorization that $x \preceq_p y$ implies that $f_r(x)\le f_r(y)$ whenever $r\neq 0,1$.  We note that if $x\preceq_p y$ then $g(r)=\sum_{i=1}^{d}x_i^r-\sum_{i=1}^dy_i^r$ is non-negative on $(0,1)$ and non-positive on $(-\infty , 0)\cup (1,\infty )$. Since $g$ is differentiable with $g'(r)=\sum_{i=1}^{d}x_i^r \ln(x_i)-\sum_{i=1}^dy_i^r \ln(y_i)$, by evaluating $\lim_{r\rightarrow 0}\frac{g(r)}{r}$ and  $\lim_{r\rightarrow 1}\frac{g(r)}{r-1}$, we get that $g'(0)=-f_0(x)+f_0(y)\ge 0$ and $g'(1)=f_1(x)-f_1(y)\le 0$, respectively. The converse, namely $f_r(x)\le f_r(y)$ for all $r\in \mathbb{R}$ implies $x\preceq_p y$, is immediate.

We note that if $x$ is strictly power-majorized by $y$, we will have $f_0 (x)\le f_0(y)$ and $f_1 (x)\le f_1(y)$ but these inequalities may not be strict. An example of this is given by Turgut in \cite{Tur07} to show that the third inequality in Theorem \ref{turg} does not follow from the other inequalities.  The following observation now easily follows; it is especially useful for proving trumping relations between $d$-tuples of integers.

\begin{observation} \label{fact} Let $x$ and $y$ be vectors in $\mathbb{R}^{d}$ with positive components with $x\prec_p y$, then $x\prec_T y$ provided that $\prod_{i=1}^{d}x_i \neq \prod_{i=1}^d y_i$ and  $\prod_{i=1}^{d}x_i^{x_i} \neq \prod_{i=1}^d y_i^{y_i}$. \end{observation}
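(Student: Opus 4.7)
My plan is to deduce $x \prec_T y$ directly from Klimesh's theorem, so the task reduces to verifying the strict inequalities $f_r(x) < f_r(y)$ for every $r \in \R$. The first preparatory step is to normalize: strict power majorization forces $\sum_i x_i = \sum_i y_i$ (the $p = 1$ case of the definition), and trumping is invariant under positive rescaling of both vectors, since such rescaling commutes with tensoring by a catalyst and majorization itself is scale-invariant. Rescaling by $1/\sum_i x_i$ therefore reduces the problem to the case where $x$ and $y$ are probability vectors. The hypothesis $\prod_i x_i \neq \prod_i y_i$ in particular implies $x^{\uparrow} \neq y^{\uparrow}$, so the non-triviality hypothesis of Klimesh's theorem is in place.

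Next, for every $r \notin \{0, 1\}$ the strict inequality in the power-sum definition of $x \prec_p y$, combined with the explicit formula for $f_r$ in each regime of $r$, immediately yields $f_r(x) < f_r(y)$. The two endpoint inequalities $f_0(x) \leq f_0(y)$ and $f_1(x) \leq f_1(y)$ are already recovered in the text from the derivative computations $g'(0) = -f_0(x) + f_0(y) \geq 0$ and $g'(1) = f_1(x) - f_1(y) \leq 0$, but these may a priori be equalities.

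The crux of the argument is then to rule out those two potential equalities using the extra hypotheses. Since $f_0(x) = -\ln \prod_i x_i$, the inequality $f_0(x) \leq f_0(y)$ reads $\prod_i x_i \geq \prod_i y_i$; combined with the assumption $\prod_i x_i \neq \prod_i y_i$, this forces $f_0(x) < f_0(y)$. Analogously, since $f_1(x) = \sum_i x_i \ln x_i = \ln \prod_i x_i^{x_i}$, the assumption $\prod_i x_i^{x_i} \neq \prod_i y_i^{y_i}$ promotes $f_1(x) \leq f_1(y)$ to $f_1(x) < f_1(y)$. With the strict Klimesh inequalities now established for every $r \in \R$, Klimesh's theorem concludes $x \prec_T y$. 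There is no real obstacle in the argument; the two auxiliary non-equality hypotheses are visibly tailored to close the strictness gap left by strict power majorization at the two critical exponents $r = 0$ and $r = 1$.
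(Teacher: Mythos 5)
Your proof is correct and follows essentially the same route the paper intends: use the derivative computations already in the text to get $f_0(x)\le f_0(y)$ and $f_1(x)\le f_1(y)$, note that the two product hypotheses force strictness at $r=0$ and $r=1$ (while strict power majorization gives strictness for all other $r$), and then invoke Klimesh's theorem. The only extra care you take — normalizing to probability vectors so that Klimesh's theorem applies verbatim — is a sensible technicality the paper leaves implicit.
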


\section{Geometry of Trumping and Power Majorization}

We follow Daftuar and Klimesh \cite{DaKl01}, in using the notation $S(y)=\{ x\in (0,\infty)^d:x\prec y\} $ and $T(y)=\{ x\in (0,\infty)^d:x\prec_T y\} $.  Similarly we let $P(y)=\{ x\in (0,\infty)^d:x\preceq_p y \}$.  While the geometric properties of $S(y)$ are quite well-known; the properties of $T(y)$ are less known (although several interesting properties of $T(y)$ were found in \cite{DaKl01}) and even less known are  the properties of $P(y)$.  In this section we will study the geometric relationship between $T(y)$ and $P(y)$.  It is clear that $S(y) \subseteq T(y) \subseteq P(y)$.  We begin with the following closure relation.

\begin{theorem} Let $y$ be a $d$-vector all of whose components are positive. Then the set $P(y)$ is the closure in $\mathbb{R}^d$ of the set $T(y)$.   \end{theorem}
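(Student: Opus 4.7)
The plan is to establish $\overline{T(y)}=P(y)$ via the two inclusions, relying on (i) the Observation that $x\in P(y)\Leftrightarrow f_r(x)\le f_r(y)$ for all $r\in\mathbb{R}$; (ii) Klimesh's theorem that $x\in T(y)\Leftrightarrow f_r(x)<f_r(y)$ for all $r\in\mathbb{R}$ (when $y$ has positive entries and $x^\uparrow\neq y^\uparrow$); and (iii) the strict Schur-convexity of each $f_r$ on $(0,\infty)^d$, which follows from the strict convexity on $(0,\infty)$ of each defining function $t\mapsto t^r$ ($r\neq 0,1$), $t\mapsto -\ln t$, and $t\mapsto t\ln t$, together with the fact that composition with a strictly monotone function preserves strict Schur-convexity.

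For $\overline{T(y)}\subseteq P(y)$, I would show that $P(y)$ is already closed in $\mathbb{R}^d$ (reading the $\mathbb{R}^n$ of the statement as $\mathbb{R}^d$). Taking $r\to\pm\infty$ in $f_r(x)\le f_r(y)$ forces $\min_i y_i\le x_i\le\max_i y_i$ for every $x\in P(y)$, placing $P(y)$ inside the compact box $[\min_i y_i,\max_i y_i]^d\subset(0,\infty)^d$; the continuity of each $f_r$ on this box then makes $P(y)$ a closed subset of a compact set in $\mathbb{R}^d$, hence closed. The inclusion $T(y)\subseteq P(y)$ gives the desired inclusion upon taking closures.

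For $P(y)\subseteq\overline{T(y)}$ I would approximate. Given $x\in P(y)$, set $\bar x=\tfrac{1}{d}\sum_i x_i=\tfrac{1}{d}\sum_i y_i$, and take as candidate
\[
x^{(\epsilon)} \;=\; (1-\epsilon)\,x+\epsilon\,\bar x\,\mathbf{1},\qquad \epsilon\in(0,1).
\]
This lies in $(0,\infty)^d$, converges to $x$ as $\epsilon\to 0^+$, and (when $x$ is non-uniform) satisfies $x^{(\epsilon)}\prec x$ with $(x^{(\epsilon)})^\uparrow\neq x^\uparrow$, because the order statistics of $x^{(\epsilon)}$ are an affine nontrivial contraction of those of $x$ toward $\bar x$. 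Strict Schur-convexity of each $f_r$ then yields
\[
f_r(x^{(\epsilon)})<f_r(x)\le f_r(y)\qquad\text{for every }r\in\mathbb{R},
\]
so Klimesh's theorem delivers $x^{(\epsilon)}\prec_T y$, hence $x\in\overline{T(y)}$. The uniform edge cases dispatch themselves: if $x$ is uniform but $y$ is not, then $x\prec y$ directly, so $x\in S(y)\subseteq T(y)$; if $y$ itself is uniform, a power-mean argument forces $P(y)=\{y\}$ while $y\in S(y)\subseteq T(y)$. The main conceptual hurdle is arranging that a single perturbation turn all the non-strict inequalities $f_r(x)\le f_r(y)$ into strict ones simultaneously in $r\in\mathbb{R}$, and the strict Schur-convexity of the $f_r$ achieves this in one stroke, without any quantitative $r$-by-$r$ estimate.
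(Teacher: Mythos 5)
Your proof is correct and follows essentially the same strategy as the paper's: show $P(y)$ is closed, then approximate each $x\in P(y)$ by a perturbation that strictly decreases every $f_r$ via strict Schur-convexity and invoke Klimesh. The only substantive difference is the choice of perturbation: the paper takes $z(t)=tx+(1-t)x'$ with $x'$ a non-identical permutation of $x$, while you contract toward the uniform vector with $x^{(\epsilon)}=(1-\epsilon)x+\epsilon\,\bar x\,\mathbf 1$. Both work because both produce a vector with the same sum whose sorted entries differ from those of $x$ yet are majorized by them, and strict Schur-convexity then turns every non-strict $f_r$ inequality into a strict one simultaneously. Your version of the first inclusion is actually a bit more careful than the paper's: the paper just appeals to continuity of $f_r$ on $(0,\infty)^d$, which by itself only shows closedness relative to $(0,\infty)^d$; your observation that the $r\to\pm\infty$ limits trap $P(y)$ in the compact box $[\min_i y_i,\max_i y_i]^d$ away from the boundary is the right way to make closedness in $\mathbb{R}^d$ airtight. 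Two small notes: the ``$y$ uniform'' edge case is subsumed by the ``$x$ uniform'' case (power majorization fixes the sum, so a uniform $y$ forces $x=y$), and you should say explicitly that $f_r(x^{(\epsilon)})<f_r(y)$ for all $r$ rules out $x^{(\epsilon)}$ being a rearrangement of $y$, so Klimesh's hypothesis $(x^{(\epsilon)})^\uparrow\neq y^\uparrow$ is automatic.
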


\begin{proof}  
Since all of the functions $f_r(x)$ are continuous on $(0,\infty)^d$, we have $\overline{T(y)}\subseteq P(y)$.  Now suppose $x\in P(y)$.  If all the entries of $x$ are the same, then $x\in S(y)\subseteq T(y)$, otherwise, there exists a vector $x^{\prime}\neq x$ where $x^{\prime}$ is some permutation of $x$.  Now let $z(t)=tx+(1-t)x^{\prime}$.  The function $f_r$ is either strictly convex or is the logarithm of a strictly convex function; hence, if $t\in (0,1)$, we have $f_r(z(t))<f_r(x)\le f_r(y)$, so $z(t)\in T(y)$. As $x=\lim_{t\to 0^+}z(t)$, we have $x\in \overline{T(y)}$. 
\end{proof}

It is straightforward to check that $T(y)$ is convex; this was mentioned in \cite{DaKl01}. Since $P(y)$ is the closure of  $T(y)$, it follows that  $P(y)$ is a convex set. Thus the set $P(y)$ is a closed convex set, and so it is the convex hull of its extreme points.   We recall that Rado's theorem for majorization states that $x$ is majorized by $y$ if and only if $(x_1, \dots, x_d)$ is contained in the convex hull of $(y_{\sigma(1)}, \dots, y_{\sigma(d)})$, where $\sigma$ is any permutation on $d$ elements. We will derive an analogue of Rado's theorem for power majorization.
 We first need the following result from Daftuar and Klimesh:

\begin{lemma} \cite{DaKl01} Let $x$ and $y$ be $d$-vectors all of whose components are positive.  Let $x_{max}$, $x_{min}$, $y_{max}$, $y_{min}$ be the values of the maximum and minimum entries of $x$ and $y$ respectively.  Suppose $x$ is a boundary point of $T(y)$, then either $x_{max}=y_{max}$ or $x_{min}=y_{min}$.  \end{lemma}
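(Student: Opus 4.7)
My plan is to prove the contrapositive: assuming $x \in P(y)$ with $x_{\max} < y_{\max}$ and $x_{\min} > y_{\min}$ both strict, I will show $x$ lies in the interior of $T(y)$ and hence is not a boundary point. Since the preceding theorem gives $\overline{T(y)} = P(y)$, and since $T(y)$ is convex (so its interior coincides with that of $P(y)$), the task reduces to producing an open neighborhood of $x$ on which $f_r(x') < f_r(y)$ holds for every real $r$.

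The argument splits according to the size of $|r|$. For large $|r|$, the sharp asymptotics $f_r(x) = r\log x_{\max} + O(1)$ as $r \to +\infty$ and $f_r(x) = r\log x_{\min} + O(1)$ as $r \to -\infty$, together with the strict gaps $\log y_{\max} - \log x_{\max} > 0$ and $\log x_{\min} - \log y_{\min} > 0$, supply a threshold $R$ and a neighborhood $U$ of $x$ on which $f_r(y) - f_r(x') \geq c|r|$ for some $c > 0$, every $x' \in U$, and every $|r| \geq R$. This reduces the problem to the compact range $r \in [-R, R]$.

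On this compact range I need strict inequality $f_r(x) < f_r(y)$ at $x$; continuity of $(r,x') \mapsto f_r(x')$ on a compact set then upgrades it to a uniform gap over a neighborhood. Suppose for contradiction that $f_{r^*}(x) = f_{r^*}(y)$ for some $r^*$, without loss of generality $r^* > 1$. Consider the exponential polynomial $p(r) = \sum_i y_i^r - \sum_i x_i^r = \sum_{k=1}^n c_k a_k^r$, where the $a_k$ are the distinct values in $\{x_i\} \cup \{y_i\}$ in increasing order and $c_k$ is the net multiplicity. Then $p(0) = p(1) = 0$ from the automatic equalities of power majorization, $p(r^*) = 0$ by hypothesis, and $p \geq 0$ on $[1,\infty)$ forces $p'(r^*) = 0$ so that $r^*$ is a zero of order at least two; counted with multiplicity, $p$ has at least four real zeros. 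By the Laguerre--P\'olya--Szeg\H{o} sign-variation bound for exponential polynomials, the number of real zeros of $\sum c_k a_k^r$ counted with multiplicity is bounded by the number of sign variations in $(c_k)$, so $(c_k)$ has at least four sign changes. The strict extremal hypotheses give $c_1 > 0$ and $c_n > 0$ (because $y_{\min}$ and $y_{\max}$ do not appear among $\{x_i\}$), forcing the parity of sign changes to be even; combined with the linear identities $\sum_k c_k = 0$ and $\sum_k c_k a_k = 0$, the derivative identity $\sum_k c_k a_k^{r^*} \log a_k = 0$, and the integer multiplicity balance $\sum_{c_k > 0} c_k = \sum_{c_k < 0} |c_k| = d$, the sign pattern $+, -, +, -, +, \ldots$ cannot be realized, yielding the contradiction.

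I expect the main obstacle to be that last combinatorial step, ruling out the forced sign pattern against the linear and derivative identities together with the integer multiplicity constraints; once the strict inequality at $x$ is in hand, the tail estimate and the compactness-continuity extension are routine. The degenerate cases $r^* \in \{0, 1\}$ and higher-order zeros at the forced points only strengthen the required sign-variation count and are absorbed into the same analysis.
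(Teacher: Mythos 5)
The paper does not prove this lemma---it imports it from Daftuar--Klimesh---so the only thing to check is whether your argument holds up. It does not: there is a genuine gap, and it is in exactly the place you flagged as the ``main obstacle.''

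Your contrapositive starts from $x\in P(y)$ with $x_{\max}<y_{\max}$ and $x_{\min}>y_{\min}$ and attempts to deduce $f_r(x)<f_r(y)$ for \emph{every} real $r$. That deduction is false. Consider $y=(2,2,6,6,10,10)$ and the one-parameter family $x(a)=(a,a,a,b,b,b)$ with $b=12-a$, which always satisfies $p(0)=p(1)=0$ where $p(r)=\sum_i y_i^r - \sum_i x_i^r$. At $a=3$ this is Bennett's example and $x(3)\in P(y)$; at $a=2$ one checks $\sum x_i^2=312>280=\sum y_i^2$, so $x(2)\notin P(y)$. Since $p(r)$ decreases in $a$ for $r>1$, there is a first exit value $a_0$ (numerically $a_0\approx 2.74$) at which $\min_{r\geq 1}p(r)=0$, attained at some $r^\ast>1$ as a tangential double zero. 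The vector $x(a_0)$ then lies in $P(y)$, satisfies the strict extremal gaps $a_0>2=y_{\min}$ and $12-a_0<10=y_{\max}$, and yet $f_{r^\ast}(x(a_0))=f_{r^\ast}(y)$. The exponential sum here has exactly the alternating sign pattern $+,-,+,-,+$ with four sign variations and four zeros (counted with multiplicity: $0$, $1$, and a double zero at $r^\ast$), saturating the Laguerre bound; nothing in the linear identities $\sum c_k=0$, $\sum c_k a_k=0$, the derivative identity, or the integer multiplicity balance obstructs this configuration. So the ``combinatorial contradiction'' you hoped for is unavailable, and the intermediate claim at the heart of your proof is simply not true for general $x\in P(y)$.

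The underlying misreading is the hypothesis. The lemma is used in the paper only for $x$ with $x\prec_T y$, i.e., $x\in T(y)$, and that is the hypothesis under which Daftuar--Klimesh prove it. Under $x\in T(y)$ you already \emph{have} $f_r(x)<f_r(y)$ for every $r$ by Klimesh's characterization; there is nothing to prove on the compact range $|r|\leq R$, and your opening tail estimate together with continuity then finishes the argument cleanly. By replacing the given hypothesis $x\in T(y)$ with the weaker $x\in P(y)$, you manufactured the one nontrivial step of your proof, and that step cannot be completed because the strengthened statement is false. Restore $x\in T(y)$ as a hypothesis, drop the exponential-polynomial machinery entirely, and the remaining two-part argument (linear-in-$|r|$ growth of $f_r(y)-f_r(x')$ for $|r|\geq R$ from the strict extremal gaps, plus a uniform positive lower bound on the compact range from continuity and the strict inequalities) is a correct and short proof.
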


We can now give our main theorem:

\begin{theorem} Let $y$ be a $d$-vector all of whose components are positive and let $x\in P(y)$.  Then the following are equivalent:

\begin{enumerate}
\item $x$ is an extreme point of $P(y)$.
\item $f_r(x)=f_r(y)$ for some $r\in \mathbb{R}$.
\item  Either $x$ is not trumped by $y$ or there exists some $d$-by-$d$ permutation matrix $P$ such that $x=Py$.
\end{enumerate}
\end{theorem}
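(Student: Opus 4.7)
My plan is to establish the chain $(2)\Leftrightarrow(3)$, then $(2)\Rightarrow(1)$, then the contrapositive $\neg(2)\Rightarrow\neg(1)$.

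The equivalence $(2)\Leftrightarrow(3)$ follows almost immediately from Klimesh's theorem. Since $x\in P(y)$ already gives $f_r(x)\le f_r(y)$ for every $r$, condition~(2) says some such inequality is tight. Klimesh's theorem asserts that $x\prec_T y$ is equivalent to strict inequality for every $r$ (provided $x^\uparrow\neq y^\uparrow$), while the alternative $x^\uparrow=y^\uparrow$ is exactly $x=Py$ for a permutation matrix $P$, and yields $f_r(x)=f_r(y)$ for every $r$ by symmetry of the $f_r$. Matching these two alternatives against (3) closes the equivalence.

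For $(2)\Rightarrow(1)$, suppose $f_{r_0}(x)=f_{r_0}(y)$ for some $r_0$, and $x=\tfrac12(x_1+x_2)$ with $x_1,x_2\in P(y)$. Each $f_{r_0}$ is either strictly convex, or is of the form $\ln h_{r_0}$ with $h_{r_0}(z)=\sum_i z_i^{r_0}$ strictly convex, as recalled in the proof of the preceding theorem. In the first case, the chain $f_{r_0}(x)\le\tfrac12\bigl(f_{r_0}(x_1)+f_{r_0}(x_2)\bigr)\le f_{r_0}(y)=f_{r_0}(x)$ forces equality throughout; strict convexity then forces $x_1=x_2$. In the second case, because $\ln$ is strictly increasing, $f_{r_0}(x_i)\le f_{r_0}(x)$ translates to $h_{r_0}(x_i)\le h_{r_0}(x)=h_{r_0}(y)$, and the same equality-in-convexity argument applied to $h_{r_0}$ forces $x_1=x_2$. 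So $x$ is extreme.

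For $\neg(2)\Rightarrow\neg(1)$, assume $f_r(x)<f_r(y)$ strictly for every $r\in\mathbb{R}$, and I construct a nonzero direction $v$ with $\sum_i v_i=0$ such that $x\pm\epsilon v\in P(y)$ for small $\epsilon>0$, realizing $x$ as the midpoint of two distinct points of $P(y)$. The asymptotic expansions $f_r(z)=r\ln z_{\max}+O(1)$ as $r\to+\infty$ and $f_r(z)=r\ln z_{\min}+O(1)$ as $r\to-\infty$, applied to the strict inequalities, force $x_{\max}\le y_{\max}$ and $x_{\min}\ge y_{\min}$, with strict multiplicity inequalities whenever either is an equality. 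A short counting argument based on these multiplicity gaps shows that one can always choose $v$ supported on indices $i$ where $x_i$ is strictly between $x_{\min}$ and $x_{\max}$ (at least in the cases when those extrema match the corresponding extrema of $y$), so that $x\pm\epsilon v$ retains the same max, min, and respective multiplicities. The map $r\mapsto f_r(y)-f_r(x)$ is continuous, positive everywhere, and has positive (possibly infinite) limits at $\pm\infty$, hence is bounded below by some $\delta^*>0$. A uniform continuity argument on a compact slab $|r|\le R$, combined with the stability of the tail asymptotics of $f_r$ under a perturbation that preserves extrema and their multiplicities, then gives $f_r(x\pm\epsilon v)<f_r(y)$ uniformly in $r$ for small enough $\epsilon$, placing $x\pm\epsilon v\in P(y)$.

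The main obstacle is precisely this last uniformity step: upgrading pointwise strict inequality across the continuum $r\in\mathbb{R}$ to a perturbation-stable strict inequality is nontrivial because the family $\{f_r\}$ is indexed by an unbounded parameter. Taming the $r\to\pm\infty$ tails by the deliberate choice of perturbation direction is the crux of the argument, and the Daftuar--Klimesh lemma (forcing $x_{\max}=y_{\max}$ or $x_{\min}=y_{\min}$ at boundary points of $T(y)$) is exactly what motivates preserving the extrema and multiplicities of $x$.
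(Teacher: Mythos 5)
Your treatment of $(2)\Leftrightarrow(3)$ and $(2)\Rightarrow(1)$ matches the paper (strict convexity of $f_r$ or of its exponential forces extremality when equality holds for some $r$; the rest is Klimesh/Turgut). The divergence is in $\neg(2)\Rightarrow\neg(1)$: the paper inducts on the dimension $d$, peeling off a matching entry $x_i=y_j$ supplied by the Daftuar--Klimesh boundary lemma and applying the inductive hypothesis to the reduced $n$-tuples, while you try a direct perturbation $x\pm\epsilon v$.

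There is a genuine gap in the perturbation argument. Your key claim that $r\mapsto f_r(y)-f_r(x)$ ``has positive (possibly infinite) limits at $\pm\infty$, hence is bounded below by some $\delta^*>0$'' is false in the exact regime where care is needed: if $x_{\max}=y_{\max}$ with the same multiplicity $m$, then both $f_r(x)$ and $f_r(y)$ behave as $r\ln x_{\max}+\ln m+o(1)$ as $r\to+\infty$, so the gap tends to $0$, not to a positive limit. The same degeneracy occurs at $r\to-\infty$ when the minima and their multiplicities coincide. In that regime the sign of $f_r(y)-f_r(x\pm\epsilon v)$ for large $|r|$ is controlled by the \emph{next-largest} (or next-smallest) entries, and establishing that the perturbed gap stays positive requires a multi-scale asymptotic comparison that your outline does not provide. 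Moreover the ``short counting argument'' that there exist enough interior indices to support a mean-zero $v$ that simultaneously preserves the max, the min, and both multiplicities is asserted rather than proved (and the caveat you parenthetically admit is precisely where the hard cases live). The paper's induction is the mechanism that resolves exactly this: a matching extremal entry is stripped from both $x$ and $y$, reducing to a lower-dimensional pair where a decomposition is produced by the inductive hypothesis and then patched back with the stripped coordinate. You invoke the Daftuar--Klimesh lemma only as motivation for ``preserving extrema,'' but its actual role in the paper is to certify the matching entry $x_i=y_j$ that makes the dimension reduction possible; without that reduction, the uniformity step of your argument does not go through.
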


\begin{proof}
  The equivalence of (2) and (3) are the results of Turgut and Klimesh, so we prove the equivalence of (1) and (2). Since $f_r(x)$ is either strictly convex or is the logarithm of a strictly convex function for any $r$, it follows that (2) implies (1).  We now show that (1) implies (3) by proving the contrapositive.  Our proof is by induction on $d$.  The base case of $d=2$ is immediate.  Now suppose our theorem holds for $d=n$ and let $x,y$ be $(n+1)$-tuples of positive numbers.  Suppose $x\prec_T y$ and $x$ is not a rearrangement of $y$.  If $x$ is an interior point of $T(y)\subseteq P(y)$, it can't be an extreme point of $P(y)$.  So suppose $x$ is a boundary point of $T(y)$, then by the previous lemma we must have $x_i=y_j$ for some $1\le i,j\le n+1$.  Let $\tilde{x}$ and $\tilde{y}$ be the $n$-tuples formed by removing $x_i$ and $y_j$ from $x$ and $y$ respectively.  Then $\tilde{x} \prec_T \tilde{y}$ and by our induction hypothesis, $\tilde{x}$ is not an extreme point of $P(\tilde{y})$.   Hence there exists $w,z\in P(\tilde{y})$, $w^{\uparrow}\neq z^{\uparrow}$ such that $\tilde{x}=\lambda w+(1-\lambda) z$ for some $\lambda\in (0,1)$. Then $x=\lambda (w_1,w_2,...,w_{i-1},x_{i},w_{i},...,w_n)+ (1-\lambda) (z_1,z_2,...,z_{i-1},x_{i},z_{i},...,z_n)$.  Since the latter two vectors are in $P(y)$, our result follows. \end{proof}

\section{Examples of Trumping}

There have been examples in the literature of vectors $x,y$ such that $x\preceq_p y$ but where $x$ is not majorized by $y$.  In \cite{Ben10}, using tools from \cite{BeJa00}, Bennett  gave an infinite family of such pairs.  We note that these examples are in fact examples of trumping and prove this by modifying Lemma 1 and Theorem 4 of \cite{BeJa00} slightly and then using these results in place of the originals. In particular, if we include strictness in one of the hypotheses of Lemma 1 of \cite{BeJa00}, we obtain strictness in the conclusion of the lemma (see Lemma \ref{BeJaL1}, below); similarly,  if we include strictness in the hypothesis of Theorem 4 of \cite{BeJa00}, we obtain strictness in the conclusion (see Theorem \ref{BeJaThm4}, below). The proofs of the modified versions of these two results follow from the original proofs with slight modifications, and so will be omitted.

\begin{lemma}\label{BeJaL1}
Suppose that $a<b<c<d$ and that $p, q, r$ are non-negative numbers. Suppose further that
\begin{eqnarray*}
b-a\leq d-c \textnormal{ and } p\lneq r\\
q(c-b)=p(b-a)+r(d-c).
\end{eqnarray*}
Let $g$ be a convex function on $[a, d]$ with $g(c)\gneq g(b)$. Then
\[
q\int_b^cg\lneq p\int_a^bg+r\int_c^dg.
\]
\end{lemma}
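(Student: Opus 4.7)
The plan is to reduce the inequality to the case of an affine function by bracketing $g$ against its chord through the two distinguished points $(b,g(b))$ and $(c,g(c))$. Let $L$ be the affine function satisfying $L(b)=g(b)$ and $L(c)=g(c)$; explicitly, $L(x)=g(b)+\beta(x-b)$ with $\beta=\tfrac{g(c)-g(b)}{c-b}$, which is strictly positive by the hypothesis $g(c)>g(b)$. Since $g$ is convex, $g(x)\le L(x)$ on $[b,c]$ and $g(x)\ge L(x)$ on $[a,b]\cup[c,d]$. Because $p,q,r\ge 0$, these pointwise bounds aggregate to
\[
p\int_a^b g+r\int_c^d g-q\int_b^c g \;\ge\; p\int_a^b L+r\int_c^d L-q\int_b^c L,
\]
so it suffices to prove the right-hand side is strictly positive.

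Next I would compute the right-hand side explicitly. Writing $L(x)=\alpha+\beta x$, the constant piece contributes $\alpha\bigl[p(b-a)+r(d-c)-q(c-b)\bigr]$, which vanishes by the hypothesised balance identity. The linear piece equals
\[
\tfrac{\beta}{2}\bigl[p(b-a)(a+b)+r(d-c)(c+d)-q(c-b)(b+c)\bigr],
\]
and substituting $q(c-b)=p(b-a)+r(d-c)$ and collecting terms collapses this to $\tfrac{\beta}{2}\bigl[r(d-c)(d-b)-p(b-a)(c-a)\bigr]$.

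Finally I would verify that this bracket is strictly positive. Adding $c-b$ to both sides of the hypothesis $b-a\le d-c$ gives $c-a\le d-b$, so each factor on the left dominates the corresponding factor on the right; hence $(d-c)(d-b)\ge(b-a)(c-a)\ge 0$. Combining with $p<r$ then yields $r(d-c)(d-b)>p(b-a)(c-a)$: if $p>0$, because $r(d-c)(d-b)\ge r(b-a)(c-a)>p(b-a)(c-a)$, while if $p=0$ the right-hand side is zero and the left is strictly positive since $r>0$ and $d>b,c$. Together with $\beta>0$, the strict inequality of the lemma follows. The only real step is the chord reduction; once $g$ is replaced by $L$, the remainder is a short algebraic manipulation driven by the given constraints, and the role of each hypothesis ($g(c)>g(b)$ activates $\beta$, while $b-a\le d-c$ and $p<r$ activate the bracket) is transparent.
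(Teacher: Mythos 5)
Your proof is correct. The paper itself omits this proof, remarking only that it follows the original Lemma~1 of Bennett--Jameson with the obvious modifications to incorporate strictness; the chord-replacement argument you give (comparing $g$ with the affine $L$ through $(b,g(b))$ and $(c,g(c))$, using convexity to get $g\le L$ on $[b,c]$ and $g\ge L$ outside, and reducing to an explicit computation for $L$) is precisely that standard route, and correctly isolates where each hypothesis is used ($g(c)>g(b)$ gives $\beta>0$, the balance identity annihilates the constant term, and $b-a\le d-c$ with $p<r$ forces the residual bracket strictly positive). The only cosmetic point is that the final case split on $p>0$ versus $p=0$ is unnecessary, since the chain $r(d-c)(d-b)\ge r(b-a)(c-a)>p(b-a)(c-a)$ already covers both cases because $(b-a)(c-a)>0$.
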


Let $M_n(f)$ denote the midpoint Riemann sum with $n$ subintervals for $\int_a^bf(t)dt$: If $[a,b]=[0,1]$, $M_n(f)=\frac1n \sum_{r=1}^nf(\frac{2r-1}{2n})$.  We can use Lemma \ref{BeJaL1} to prove a strengthened version of Theorem 4 of \cite{BeJa00} following the original proof using this new lemma.

\begin{theorem}\label{BeJaThm4}
Suppose that $f'$ is either convex or concave on $[a, b]$. If $f$ is strictly convex, then $M_n(f)$  strictly increases with $n$; if $f$ is strictly concave, then $M_n(f)$  strictly decreases with $n$.
\end{theorem}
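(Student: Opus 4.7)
The plan is to follow the original proof of Theorem~4 of \cite{BeJa00} line by line, substituting our strengthened Lemma \ref{BeJaL1} for the original (weak) Lemma~1 at the single point where that lemma is invoked. Recall that the original argument expresses the difference $M_{n+1}(f)-M_n(f)$ (or the analogous comparison between midpoint sums on nested meshes) as a finite sum of telescoping pieces, each of which has the shape $p\int_a^b g + r\int_c^d g - q\int_b^c g$, with $g=f'$ and with $a<b<c<d$, $p,q,r\geq 0$ arising from the arithmetic of equal subdivisions. The side conditions $b-a\leq d-c$ and $q(c-b)=p(b-a)+r(d-c)$, as well as $p\leq r$, are verified in \cite{BeJa00} by a direct computation that does not involve $f$ at all, so they remain valid verbatim.

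First I would extract, from the Bennett--Jameson proof, the explicit data $(a,b,c,d,p,q,r)$ attached to each summand and record which inequalities among them are automatically strict. Next I would check that, under our additional strict hypothesis on $f$, the two ``strictness'' hypotheses of Lemma \ref{BeJaL1}, namely $p\lneq r$ and $g(c)\gneq g(b)$, are actually attained in each non-trivial summand. Strict convexity of $f$ makes $f'=g$ strictly increasing, so $g(c)>g(b)$ whenever $c>b$; and the strict separation $p<r$ is built into the arithmetic of the subdivision whenever the two midpoint grids differ. Applying Lemma \ref{BeJaL1} term by term then yields the strict inequality $q\int_b^c g \lneq p\int_a^b g + r\int_c^d g$ for every summand, and summing gives $M_n(f)<M_{n+1}(f)$ strictly. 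The strictly concave case follows by replacing $f$ with $-f$, whose derivative is again convex or concave as required, reversing the inequality to obtain strict monotonic decrease.

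The main obstacle is purely bookkeeping: verifying that in every telescoping piece the hypothesis $p\lneq r$ of Lemma \ref{BeJaL1} is indeed achieved, and not merely $p\leq r$. Even if some summands satisfy only $p=r$, it is enough to exhibit at least one summand where $p<r$ strictly, since each of the remaining summands still contributes non-negatively via the original (weak) lemma and strictness in one term propagates to the whole sum. The existence of such a term is immediate from the strict convexity of $f$ combined with the fact that the two midpoint grids of size $n$ and $n+1$ differ on at least one subinterval. This is precisely the sense in which the proof ``follows the original proof save for these slight modifications,'' and why the authors can legitimately omit the detailed verification.
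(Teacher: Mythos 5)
Your proposal matches the paper's approach exactly: the paper explicitly states that the proof of Theorem~\ref{BeJaThm4} ``follows the original proof'' of Theorem~4 in \cite{BeJa00}, with the strengthened Lemma~\ref{BeJaL1} dropped in for the original one, and that the details are therefore omitted. You spell out the substitution in more detail than the paper does, and your fallback observation (that it suffices to get strictness from Lemma~\ref{BeJaL1} in one summand while the remaining summands contribute non-negatively via the original weak lemma) is a sensible hedge against the possibility that $p=r$ in some terms, though in your final sentence you slightly conflate the source of $p<r$ (which is purely combinatorial, coming from the mesh arithmetic and independent of $f$) with the source of $g(c)>g(b)$ (which is where strict convexity of $f$ enters, via $g=f'$ strictly increasing). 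Separating those two checks cleanly would tighten the write-up, but the argument as proposed is correct and is the same argument the authors intended.
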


\medskip
\medskip

\noindent{\textbf{Example }}
A system considered by Bennett (with non-strict inequalities) \cite{Ben10} is
\begin{eqnarray}\label{Bensys}
\frac{1^p}{1^{p+1}}< \frac{1^p+3^p}{2^{p+1}}< \frac{1^p+3^p+5^p}{3^{p+1}}< \cdots \frac{1^p+3^p+\cdots+(2n-1)^p}{n^{p+1}}< \cdots
\end{eqnarray}
for $p>1, p<0$ and reversed for $0<p<1$.

Bennett shows  that this system leads to power majorization by considering one inequality (say, the second one) and cross-multiplying, yielding $3^p+3^p+3^p+9^p+9^p+9^p\leq 2^p+2^p+6^p+6^p+10^p+10^p$ (for appropriate $p$). In other words, $x=(3, 3, 3, 9, 9, 9)\preceq_p (2, 2, 6, 6, 10, 10)=y$. He further mentions that the power majorizations found in this way are \emph{not majorizations}, save for the first inequality, and gives an example. To see this in general, we consider the $n$-th inequality of the system:
\[
\frac{1^p+3^p+\cdots+(2n-1)^p}{n^{p+1}}<\frac{1^p+3^p+\cdots+(2n+1)^p}{(n+1)^{p+1}},
\]
which yields, upon cross-multiplying, $n+1$ copies of $(n+1)^p+(3(n+1))^p+\cdots+\big((2n-1)(n+1)\big)^p$ being strictly less than (for appropriate $p$) $n$ copies of $n^p+(3n)^p+\cdots+(n(2n+1))^p$. These sums are already arranged in increasing order. We note that the first $n$ inequalities of majorization are met, but the $(n+1)$-th inequality is flipped:
\begin{eqnarray*}
n+1&>&n\\
2(n+1)&>&2n\\
&\vdots&\\
n(n+1)&>&n^2\\
(n+1)(n+1)&<& n^2+3n,
\end{eqnarray*}
for all $n>1$, thus we see explicitly where majorization fails.

Although the system from \cite{Ben10} is written with non-strict inequalities, we claim that the inequality is strict for $p\neq 0, 1$, hence equation (\ref{Bensys}) gives us an infinite number of trumping relations.

Indeed, if we rewrite the second inequality of equation (\ref{Bensys}), we have
\begin{eqnarray}\label{Bensys2}
 \frac{\left(\frac{1}{2}\right)^p+\left(\frac{3}{2}\right)^p}{2}< \frac{\left(\frac{1}{3}\right)^p+\left(\frac{3}{3}\right)^p+\left(\frac{5}{3}\right)^p}{3}.
\end{eqnarray}

Consider now something seemingly different: approximating the integral
 \[
\frac12\int_0^2x^p\, dx
\]
using midpoint Riemann sums. If we divide the interval $[0,2]$ into two equal intervals, our midpoints are $1/2$ and $3/2$; if we divide the interval $[0,2]$ into three equal intervals, our midpoints are $1/3, 3/3, 5/3$. Thus, asking whether the inequality is strict in (\ref{Bensys}) amounts to asking whether estimates of an integral of the function $f(x)=x^p$ using midpoint Riemann sums strictly improves as $n$ increases.  For instance, $M_2(x^p)<M_3(x^p)$ (where $[a,b]=[0,2]$) gives us $\frac{\left(\frac12\right)^p + \left(\frac32\right)^p}{2} <\frac{\left(\frac13\right)^p + \left(\frac33\right)^p + \left(\frac53\right)^p}{3}$ which is the second inequality in (\ref{Bensys}).  The full system of inequalities in (\ref{Bensys}) is $M_1(x^p)<M_2(x^p)<M_3(x^p)...$ for $p\in (-\infty,0)\cup (1,\infty)$ and with the inequalities reversed for $p\in (0,1)$.

We can show that these inequalities hold by using Theorem \ref{BeJaThm4}. We observe that $f(x)=x^p$ is strictly convex for $p>1, p<0$ and strictly concave for $0<p<1$, yielding $M_n(f)>M_m(f)$ for all $m<n$ when $p>1, p<0$ and the inequality reverses for $0<p<1$---precisely what is needed for strict power majorization. Thus the inequalities in system in (\ref{Bensys}) are strict, and in particular, $x=(3, 3, 3, 9, 9, 9)$ is strictly power majorized by $y=(2, 2, 6, 6, 10, 10)$.  To prove trumping, we use Observation \ref{fact} and note that one vector is composed of odd numbers and the other is composed of even numbers so neither product can be equal; the same idea works for any pair of $n$-tuples generated by the sequence of inequalities in (\ref{Bensys}) since every term in the numerator is odd and the denominators alternate between odd and even which means one of the tuples will consist entirely of odd numbers and the other entirely of evens.  This gives us an infinite sequence of pairs of vectors $(x,y)$ where $x$ is trumped but not majorized by $y$.

\medskip

\medskip

\noindent{\textbf{Example}}
Not every construction of the type considered in the previous example leads to non-trivial trumping (non-trivial in the sense that we do not also have majorization).

Indeed, if $p > 1$ or $p < 0$, we have the system of strict inequalities \cite{Ben05}
\[
\frac{1^p}{3^p} <\frac{1^p + 3^p}{5^p + 7^p} <\frac{1^p + 3^p + 5^p}{7^p + 9^p + 11^p} <\cdots <
\frac{1^p + \cdots+ (2n - 1)^p}{(2n + 1)^p + \cdots+ (4n - 1)^p} < \cdots
\]
The inequality reverses for $0<p<1$.

For sake of example, let us look at the second inequality,
\[
\frac{1^p + 3^p}{5^p + 7^p} <\frac{1^p + 3^p + 5^p}{7^p + 9^p + 11^p}.
\]
Cross-multiplying, we obtain
\[
7^p+9^p+11^p+21^p+27^p+33^p<5^p+7^p+15^p+21^p+25^p+35^p
\]
for $p > 1$ or $p < 0$, with reverse inequality for $0<p<1$. This is an example of strict power majorization; however, one can easily check that $(7, 9, 11, 21,27,33)$ is majorized by $(5, 7, 15, 21, 25, 35)$, and in that  sense this is a trivial example.
\medskip

\section*{Acknowledgements} The authors are grateful to Grahame Bennett for providing preprints of his work on majorization and inequalities. D.W.K. was supported by NSERC Discovery Grant 400160 and NSERC Discovery Accelerator Supplement 400233. R.P. was supported by NSERC Discovery Grant 400550. S.P. was supported by an NSERC Doctoral Scholarship.


\begin{thebibliography}{50}

\bibitem[1]{All88} G.\ D.\ Allen, \emph{Power Majorization and Majorization of Sequences}, Result.\ Math.\ \textbf{14} (1988), pp.\ 211--222.

\bibitem[2]{AuNe08} G.\ Aubrun and I.\ Nechita, \emph{Catalytic Majorization and $\ell_p$ Norms}, Comm.\ Math.\ Phys.\ \textbf{278}, no.\ 1 (2008), pp.\ 133--144.

\bibitem[3]{Ben86} G.\ Bennett, \emph{Power Majorization versus Majorization}, Anal.\ Math.\ \textbf{12} (1986), pp.\ 283-286.


\bibitem[4]{Ben05} G.\ Bennett, \emph{An Odd Inequality}, Problem 11139, Amer.\ Math.\ Monthly \textbf{112} (2005), p.\ 273.


\bibitem[5]{Ben10} G.\ Bennett, \emph{Some Forms of Majorization}, Houston J.\ Math.\ \textbf{36}, no.\ 4 (2010), pp.\ 1037--1066.

\bibitem[6]{Ben11} G.\ Bennett, \emph{A Problem of Ghorbani}, Anal.\ Math.\ \textbf{37} (2011), pp.\ 239-244.


\bibitem[7]{BeJa00} G.\ Bennett and G.\ Jameson, \emph{Monotonic Averages of Convex Functions}, J.\ Math.\ Anal.\ Appl.\ \textbf{252} (2000), pp.\ 410--430.

\bibitem[8]{Cla84} A.\ Clausing, \emph{A Problem Concerning Majorization}, General Inequalities 4 (W.\ Walter, Ed.), Birkh\"auser, Basel,
1984, p.\ 405.


\bibitem[9]{DaKl01} S.\ K.\  Daftuar and M.\ Klimesh, \emph{Mathematical structure of entanglement catalysis}, Phys.\ Rev.\ A (3) \textbf{64} no.\ 4 (2001), 042314.

\bibitem[10]{JoPl99}D.\ Jonathan and M.\ B.\ Plenio, \emph{Entanglement-Assisted Local Manipulation of Pure Quantum States}, Phys.\ Rev.\ Lett.\ \textbf{83} (1999), pp.\ 3566--3569.

\bibitem[11]{Kli04} M.\ Klimesh, \emph{Entropy Measures and Catalysis of Bipartite
Quantum State Transformations}, Proceedings 2004 IEEE International Symposium on Information Theory, Chicago, Il (USA), 2004.

\bibitem[12]{Kli07} M.\ Klimesh, \emph{Inequalities that Collectively Completely Characterize
the Catalytic Majorization Relation}, 2007. Available at: http://arxiv.org/abs/0709.3680v1

\bibitem[13]{LoPr01} H.-K.\ Lo and S.\ Popescu, \emph{Concentrating Entanglement by Local Actions: Beyond Mean Values}, Phys.\ Rev.\ A, \textbf{63} (2001), 022301.

\bibitem[14]{MOA11} A.\ W.\ Marshall, I.\ Olkin, and B.\ C.\ Arnold, \emph{Inequalities: Theory of Majorization and its Applications}, 2nd ed. Springer: New York, 2011.

\bibitem[15]{Nie99} M.\ Nielsen, \emph{Conditions for a Class of Entanglement Transformations}, Phys.\ Rev.\ Lett.\ \textbf{83}, no.\ 2 (1999), pp.\ 436--439.
\bibitem[16]{Tur07} S.\ Turgut, \emph{Catalytic Transformations for Bipartite Pure States}, J.\ Phys.\ A: Math.\ Theor.\ \textbf{40} (2007), pp.\ 12185--12212.

\end{thebibliography}
\end{document}